\g@addto@macro\bfseries{\boldmath}
\newcommand{\N}{\mathcal{N}}
\newcommand{\conj}[1]{\overline{#1}}
\newcommand{\Ip}[2]{\Big\langle #1, #2 \Big\rangle}
\newcommand{\supp}{\text{supp} \,}
\newcommand{\h}{\mathcal{H}}
\renewcommand\Im{\operatorname{Im}}
\newtheorem{thm}{Theorem}[section]
\newtheorem{lemma}[thm]{Lemma}
\newtheorem{cor}[thm]{Corollary}
\theoremstyle{definition}
\theoremstyle{definition}
\begin{document}
\title{\textbf{Nearly invariant subspaces of de Branges spaces}}
\author{Bartosz Malman}
\date{}
\maketitle

\begin{abstract} We prove that the nearly invariant subspaces of a de Branges space $\h(E)$ which have no common zeros are precisely of the form an exponential function times a de Branges space $\h(E_0)$.\end{abstract}

\section{Introduction and the main result}

Let $\h$ be a space of analytic functions defined on some domain of the complex plane $\mathbb{C}$. A concept commonly appearing in operator theory and complex function theory is that of nearly invariance of $\h$. The space is said to be nearly invariant if the zeros of functions in $\h$ can be divided out without leaving the space. More precisely, if $f \in \h$ and $f(\lambda) = 0$, then $f(z)/(z-\lambda) \in \h$. Nearly invariance is sometimes instead referred to as the division property. More generally, if all functions in the space $\h$ vanish on some subset of the complex plane, then the space $\h$ will be called nearly invariant if zeros of the functions in $\h$ can be divided out as long as they do not belong to the common zero set. 

This short article is concerned with the (always norm-closed) nearly invariant subspaces of the de Branges spaces. An entire function $E$ which satisfies the inequality $|E(z)| > |E(\conj{z})|$ for $z$ in the upper half plane $\mathbb{C}^+ = \{ z \in \mathbb{C} : \Im z > 0\}$ is called a de Branges function, and to each such function there exists an associated de Branges space $\h(E)$. Let $H^2(\mathbb{C}^+)$ denote the usual Hardy space of the upper half plane, and for an entire function $f$ define $f^*(z) := \conj{f(\conj{z})}$. The de Branges space $\h(E)$ is the Hilbert space of entire functions $f$ which satisfy the following three properties:
\begin{enumerate}[\itshape(i)]
\item $f/E \in H^2(\mathbb{C}^+)$,
\item $f^*/E \in H^2(\mathbb{C}^+)$, 
\item $\|f\|^2_{\h(E)} := \int_{\mathbb{R}} |f/E|^2 dx < \infty$.
\end{enumerate} The space $\h(E)$, with norm given by $(iii)$ above, is a reproducing kernel Hilbert space with kernel given by $$k_E(\lambda, z) = \frac{E(z)\conj{E(\lambda)} - E^*(z)\conj{E^*(\lambda)}}{2\pi i(\conj{\lambda} - z)}.$$ Conversely, any kernel of the above form, with $E$ a de Branges function, will of course be the reproducing kernel of a de Branges space. More background information on this class of spaces can be found in de Branges' monograph \cite{de1968hilbert}.

The result that we will be proving here is the following structure theorem for nearly invariant subspaces of de Branges spaces. 

\begin{thm} \thlabel{maintheorem}
Let $\N$ be a nearly invariant subspace with no common zeros of a de Branges space $\h(E)$. Then there exists a de Branges space $\h(E_0)$ and $\alpha \in \mathbb{R}$ such that $$\N = e^{i\alpha z}\h(E_0) = \{ e^{i\alpha z}f(z) : f \in \h(E_0)\}.$$
\end{thm}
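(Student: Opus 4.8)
The plan is to identify $\N$ through de Branges' own axiomatic description of his spaces rather than to construct $E_0$ by hand. Recall that a Hilbert space of entire functions is isometrically some $\h(E_0)$ precisely when (i) for every non-real zero $w$ of an element $f$ the function $\frac{z-\conj{w}}{z-w}f(z)$ again lies in the space and has the same norm, (ii) point evaluations at non-real points are bounded, and (iii) the space is invariant under $f\mapsto f^{*}$ with $\|f^{*}\|=\|f\|$. My first observation is that a nearly invariant $\N$ automatically satisfies (i) and (ii). Indeed, if $f\in\N$ and $f(w)=0$, then near invariance gives $f/(z-w)\in\N$, and since $\frac{z-\conj{w}}{z-w}f = f+(w-\conj{w})\frac{f}{z-w}$ the transformed function again lies in $\N\subseteq\h(E)$; moreover $\big|\frac{z-\conj{w}}{z-w}\big|=1$ on $\mathbb{R}$, so — because the norm of any $g\in\h(E)$ is $\big(\int_{\mathbb{R}}|g/E|^2\,dx\big)^{1/2}$ — the map is isometric. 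Boundedness of evaluations is inherited from $\h(E)$. Thus the only de Branges axiom that can fail for $\N$ is the conjugation symmetry (iii).

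Consequently it suffices to find a single real number $\alpha$ for which $\N_0:=e^{-i\alpha z}\N$ is invariant under $f\mapsto f^{*}$: axioms (i) and (ii) survive multiplication by the zero-free factor $e^{-i\alpha z}$, which is unimodular on $\mathbb{R}$, so once (iii) holds de Branges' theorem yields $\N_0=\h(E_0)$ and hence $\N=e^{i\alpha z}\h(E_0)$, the desired conclusion. Since $(e^{-i\alpha z}f)^{*}=e^{i\alpha z}f^{*}$, the space $\N_0$ is conjugation invariant exactly when the reflected space $\N^{*}=\{f^{*}:f\in\N\}$ satisfies $\N^{*}=e^{-2i\alpha z}\N$. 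So the whole theorem reduces to the single claim that $\N^{*}$ is an exponential multiple of $\N$.

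To produce $\alpha$ I would pass to the model-space picture. Writing $\Theta=E^{*}/E$, the unitary $f\mapsto f/E$ carries $\h(E)$ onto the model space $K_\Theta\subseteq H^2(\mathbb{C}^+)$, carries $\N$ onto a subspace $M$ that is nearly invariant inside $K_\Theta$, and intertwines $f\mapsto f^{*}$ with the canonical conjugation $C_\Theta u=\Theta u^{*}$ of the model space; the target identity becomes $C_\Theta(M)=e^{-2i\alpha z}M$. The number $\alpha$ is then read off from the extremal function $\varphi$ of $M$, namely the normalized reproducing kernel of $M$ at the base point $i$: as the solution of a reproducing-kernel extremal problem for a nearly invariant space it is zero-free in the open upper half-plane, hence of a definite mean type there, and I would take $2\alpha$ to be the mean type of the meromorphic ratio $\varphi^{*}/\varphi$. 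Showing that this choice actually symmetrizes the space — that $C_\Theta(M)$ and $e^{-2i\alpha z}M$ coincide, and not merely have matching extremal functions — is where the real work lies and is the step I expect to be the main obstacle. Here I would exploit the rigidity of nearly invariant subspaces, in the spirit of the Hitt–Sarason factorization $M=\varphi K_b$ by an isometric multiplier and an inner function $b$ which the entireness of the elements of $\N$ forces to be meromorphic inner: matching the extremal data together with this common factorization should pin the two spaces to each other. Finally, the no-common-zeros hypothesis re-enters to guarantee that the function $E_0$ delivered by de Branges' theorem is genuinely zero-free in the closed upper half-plane, i.e. an honest de Branges function, completing the identification $\N=e^{i\alpha z}\h(E_0)$.
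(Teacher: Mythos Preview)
Your reduction via de Branges' axiomatic characterization is correct: a nearly invariant $\N\subseteq\h(E)$ with no common zeros automatically satisfies the first two axioms, so the whole problem collapses to finding a real $\alpha$ with $\N^{*}=e^{-2i\alpha z}\N$. That part is sound and efficient.

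The gap is exactly where you flag it yourself. You do not prove that such an $\alpha$ exists; you propose to read $2\alpha$ from the mean type of $\varphi^{*}/\varphi$ and then to invoke a Hitt--Sarason factorization $M=\varphi K_b$, but none of this is carried out. You do not establish a half-plane Hitt theorem (where the isometric-multiplier property and the role of the point at infinity need care), you do not show how the model-space conjugation $C_\Theta$ interacts with such a factorization, and you do not explain why matching extremal functions forces the \emph{equality} $C_\Theta(M)=e^{-2i\alpha z}M$ rather than only an inclusion. The central analytical step of the theorem is therefore missing, and the tools you name are not obviously sufficient to close it.

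The paper bypasses model spaces entirely. It computes the reproducing kernel of $\N$ directly (by the same manipulation that underlies de Branges' Theorem~23) as $k_\N(\lambda,z)=\big(F(z)\conj{F(\lambda)}-G(z)\conj{G(\lambda)}\big)/\big(i(z-\conj{\lambda})\big)$ with $F,G$ entire and $F/E$, $F^{*}/E$, $G/E$, $G^{*}/E$ of bounded type in $\mathbb{C}^{+}$. Positivity of the kernel on the diagonal yields $F^{*}F=G^{*}G$ together with $|F|>|G|$ on $\mathbb{C}^{-}$ and $|F|<|G|$ on $\mathbb{C}^{+}$; these three facts force $U:=G^{*}/F$ to be entire, zero-free, unimodular on $\mathbb{R}$, and of bounded type in $\mathbb{C}^{+}$, hence $U(z)=e^{i\alpha z}$ with $\alpha\in\mathbb{R}$. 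A short computation then shows that $\sqrt{U}\,\N$ has a kernel of genuine de Branges form. This produces the exponential and verifies the required symmetry in one stroke, with no appeal to extremal functions or Hitt--Sarason.
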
 For some special classes of de Branges functions $E$ the above theorem can be refined, and as an application of \thref{maintheorem} we shall give a new proof of a result of \cite{aleman2008derivation} which characterizes the nearly invariant subspaces of the Paley-Wiener spaces. As usual, for $a > 0$, the Paley-Wiener space $PW_a$ consists of the entire functions $F$ which are the Fourier transforms $F = \hat{f}$ of functions $f$ in $L^2(-a,a)$, with an alternative characterization as the space of entire functions of exponential type at most $a$ which are square-integrable on the real axis. Equipped with the usual $L^2$-norm computed on the real axis, the space $PW_a$ is a de Branges space corresponding to the function $E(z) = e^{-ia z}$. 

\begin{cor}[\cite{aleman2008derivation}] \thlabel{PWnearinv} If $\N$ is a nearly invariant subspace with no common zeros of a Paley-Wiener space $PW_a$, then there exists an interval $I \subseteq (-a, a)$ such that $$\N = \{f \in PW_a : \supp \hat{f} \subset I \}.$$
\end{cor}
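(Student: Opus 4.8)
The plan is to derive the corollary as a direct specialization of the main theorem to the case $E(z) = e^{-iaz}$. By \thref{maintheorem}, any nearly invariant subspace $\N$ with no common zeros equals $e^{i\alpha z}\h(E_0)$ for some de Branges function $E_0$ and some $\alpha \in \mathbb{R}$. The strategy is to translate both the membership $\N \subseteq PW_a$ and the product structure into statements about the Fourier transform, and then read off that multiplication by $e^{i\alpha z}$ shifts the support of $\hat{f}$ while the factor $\h(E_0)$ forces the support to lie in a subinterval.

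First I would record the Fourier-analytic dictionary for the Paley-Wiener setting. For $f \in PW_a$ we have $f = \hat{g}$ with $g \in L^2(-a,a)$, and the multiplication $e^{i\alpha z}f(z)$ corresponds on the transform side to a translation of $g$ by $\alpha$. Thus the exponential factor $e^{i\alpha z}$ in the representation $\N = e^{i\alpha z}\h(E_0)$ is exactly what produces a shift of the support interval. The key point is therefore to identify what subspaces $\h(E_0)$ can appear: since $\N \subseteq PW_a$, every element $e^{i\alpha z}f$ with $f \in \h(E_0)$ must itself lie in $PW_a$, which constrains $E_0$.

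Next I would argue that $\h(E_0)$, after accounting for the shift, must again be a Paley-Wiener space. The containment $e^{i\alpha z}\h(E_0) \subseteq PW_a$ forces the functions in $\h(E_0)$ to be of exponential type bounded in a way compatible with $a$ and $\alpha$, and to be square-integrable on $\mathbb{R}$; combined with the de Branges structure this should pin down $E_0$ as $E_0(z) = e^{-ibz}$ (up to a harmless unimodular constant and real translation absorbed into $\alpha$) for some $b$ with $0 < b \leq a$, so that $\h(E_0) = PW_b$. The support of $\hat{f}$ for $f \in PW_b$ lies in $(-b,b)$, and after the translation by $\alpha$ coming from the exponential factor, the support of the transform of $e^{i\alpha z}f$ lands in a subinterval $I$ of $(-a,a)$ of length $2b$. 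One then verifies that $\N$ is precisely the set of $f \in PW_a$ with $\supp \hat{f} \subset I$.

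The main obstacle I anticipate is the identification step: showing rigorously that the abstract de Branges space $\h(E_0)$ furnished by \thref{maintheorem} must be a genuine Paley-Wiener space rather than some other de Branges space. This requires extracting from the containment $e^{i\alpha z}\h(E_0) \subseteq PW_a$ enough information about the growth and the Fourier support of elements of $\h(E_0)$ to conclude that $E_0$ is (up to normalization) an exponential, and then converting the resulting support constraints into the explicit interval $I$. Handling the endpoints and the precise normalization of $E_0$ — ensuring that the constant $\alpha$ and any real shift hidden in $E_0$ combine correctly so that $I$ is a well-defined subinterval of $(-a,a)$ — is the delicate bookkeeping that the proof will need to carry out carefully.
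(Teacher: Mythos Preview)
Your outline begins correctly by invoking \thref{maintheorem} to write $\N = e^{i\alpha z}\h(E_0)$ and by noting that the exponential factor translates Fourier support. The gap is exactly the step you flag as ``the main obstacle'': identifying $\h(E_0)$ as a Paley-Wiener space. The argument you sketch --- that containment $e^{i\alpha z}\h(E_0)\subseteq PW_a$ bounds the exponential type of elements of $\h(E_0)$ and forces square-integrability on $\mathbb{R}$, hence ``combined with the de Branges structure'' pins down $E_0$ as an exponential --- does not go through as stated. Knowing that every function in a de Branges space has exponential type at most $a'$ and lies in $L^2(\mathbb{R})$ only says $\h(E_0)\subseteq PW_{a'}$ as sets; it does not by itself force $E_0$ to be zero-free or $\h(E_0)$ to coincide with any $PW_c$. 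A priori there could be many de Branges spaces sitting isometrically inside a Paley-Wiener space, and a growth bound on elements does not distinguish them.

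The paper closes this gap with a structural tool you do not mention: the de Branges ordering theorem (\cite[Theorem 35]{de1968hilbert}). One first observes $e^{ibz}\N\subseteq PW_{a+|b|}$, and then the ordering theorem guarantees that for every $c\in(0,a+|b|)$ the de Branges space $e^{ibz}\N$ is comparable (by inclusion) with $PW_c$. Taking $c_0=\sup\{c:PW_c\subseteq e^{ibz}\N\}$ and $c_1=\inf\{c:e^{ibz}\N\subseteq PW_c\}$, comparability forces $c_0=c_1$ and hence $e^{ibz}\N=PW_{c_0}$. This is precisely the missing identification. Your Fourier-shift bookkeeping is fine once this is in place, but without the ordering theorem (or an equivalent structural result) the proof is incomplete.
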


\section{Proofs}

We shall use the notation already introduced in the previous section. Furthermore, we shall use the concept of the Nevanlinna class of the upper half plane, which is the class of functions analytic in $\mathbb{C}^+$ that can be written as a quotient of two bounded analytic functions in $\mathbb{C}^+$. The lower half plane consisting of $z \in \mathbb{C}$ with negative imaginary part will be denoted by $\mathbb{C}^-$.

\begin{lemma} \thlabel{kernellemma} If $\N$ is a nearly invariant subspace with no common zeros of a de Branges space $\h(E)$, then the reproducing kernel $k_\N$ of $\N$ is of the form \begin{equation}
 k_\N(\lambda, z) = \frac{F(z)\conj{F(\lambda)} - G(z) \conj{G(\lambda)}}{i(z - \conj{\lambda})}, \label{kerneleq}
\end{equation} where the functions $F$ and $G$ are entire, and $F/E, F^*/E, G/E, G^*/E$ are in the Nevanlinna class of the upper half plane. 
\end{lemma}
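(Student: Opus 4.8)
The plan is to transport the problem to the Hardy space $H^2(\mathbb{C}^+)$, where nearly invariant subspaces are completely understood, and then read off the kernel. Since $E$ is a de Branges function it has no zeros in $\mathbb{C}^+$, and by property $(iii)$ the map $U \colon f \mapsto f/E$ is an isometry of $\h(E)$ onto a subspace of $H^2(\mathbb{C}^+)$. First I would check that $M := U\N = \{f/E : f \in \N\}$ is again nearly invariant, with no common zeros, as a subspace of $H^2(\mathbb{C}^+)$: if $h = f/E \in M$ vanishes at $w \in \mathbb{C}^+$ then $f(w) = 0$ (as $E(w) \ne 0$), so $f(z)/(z-w) \in \N$ by near invariance of $\N$, and dividing by $E$ shows $h(z)/(z-w) \in M$; the absence of common zeros transfers verbatim.

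Next I would invoke the Hitt--Sarason structure theorem for nearly invariant subspaces, transplanted from the disc to $\mathbb{C}^+$: there is an isometric multiplier $g$, namely the normalized reproducing kernel of $M$ at a fixed point $z_0 \in \mathbb{C}^+$, and a model space $K_\theta$ (with $\theta$ inner, or $K_\theta = H^2(\mathbb{C}^+)$) such that $M = gK_\theta$ and multiplication by $g$ is isometric from $K_\theta$ onto $M$. Because $U$ is unitary and multiplication by $g$ is isometric, the reproducing kernel of $\N$ factors as $k_\N(\lambda, z) = E(z)\conj{E(\lambda)}\, g(z)\conj{g(\lambda)}\, k_\theta(\lambda, z)$, where $k_\theta$ is the model-space kernel. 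Substituting the explicit form of $k_\theta$ and multiplying through by $i(z - \conj\lambda)$, a direct computation collapses the expression to $F(z)\conj{F(\lambda)} - G(z)\conj{G(\lambda)}$ with $G = (2\pi)^{-1/2}Eg$ and $F = (2\pi)^{-1/2}Eg\theta$ (and $F \equiv 0$ in the degenerate case $K_\theta = H^2(\mathbb{C}^+)$), which is exactly \eqref{kerneleq}.

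The regularity claims then split into an easy half and the genuine obstacle. The function $G$ is, up to a constant, $k_\N(z_0, \cdot)$, which lies in $\N \subseteq \h(E)$; hence $G$ is entire and, by properties $(i)$ and $(ii)$, both $G/E$ and $G^*/E$ already lie in $H^2(\mathbb{C}^+)$, a fortiori in the Nevanlinna class. The trouble is $F = (2\pi)^{-1/2}Eg\theta$: a priori $g$ and $\theta$ are only defined and analytic in $\mathbb{C}^+$, so $F$ is merely analytic there, and it is not clear that $F$ extends to an entire function, nor that $F^*/E$ is Nevanlinna. I expect this to be the main difficulty.

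I would resolve it using the kernel identity itself as an instrument of analytic continuation. Fixing $\lambda_0 \in \mathbb{C}^+$ with $F(\lambda_0) \ne 0$ and solving \eqref{kerneleq} for $F$ gives, for $z \in \mathbb{C}^+$,
\[
F(z) = \frac{i(z - \conj{\lambda_0})\, k_\N(\lambda_0, z) + G(z)\conj{G(\lambda_0)}}{\conj{F(\lambda_0)}}.
\]
The right-hand side is entire, since both $k_\N(\lambda_0, \cdot)$ and $G$ belong to $\N$, so this formula continues $F$ to an entire function and simultaneously expresses it, together with its reflection $F^*$, through $k_\N(\lambda_0, \cdot)$, its conjugate $k_\N(\lambda_0, \cdot)^*$, and $G, G^*$. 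Dividing by $E$, using that $h/E$ and $h^*/E$ lie in $H^2(\mathbb{C}^+)$ for every $h \in \h(E)$, and noting that multiplication by the polynomial $z - \lambda_0$ preserves the Nevanlinna class, then yields that $F/E$ and $F^*/E$ are in the Nevanlinna class of $\mathbb{C}^+$, completing the proof.
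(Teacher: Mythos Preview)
Your argument is correct and follows a genuinely different route from the paper's. The paper works intrinsically with the reproducing kernel, mimicking de Branges' own computation (Theorem~23 of his monograph): one fixes a non-real point $\alpha$, pairs against functions in $\N$ vanishing at $\conj{\alpha}$, and derives an algebraic identity for $k_\N(\lambda,z)$ directly; the functions $F$ and $G$ then appear explicitly as normalizations of $(z-\conj{\alpha})k_\N(\alpha,z)$ and $(z-\alpha)k_\N(\conj{\alpha},z)$. Both are visibly polynomial multiples of elements of $\h(E)$, so entirety and the Nevanlinna conditions are immediate and no continuation step is needed. Your approach instead imports the Hitt--Sarason structure theorem, which is more conceptual---it identifies $\N/E$ with a twisted model space and hands you the inner function $\theta = F/G$ for free---but pays for this by producing $F = (2\pi)^{-1/2}Eg\theta$ only as an analytic function on $\mathbb{C}^+$, which you then have to continue to all of $\mathbb{C}$ via the kernel identity. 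That continuation step is clean and correct. One small remark: the degenerate case $K_\theta = H^2(\mathbb{C}^+)$ cannot actually occur (if $F\equiv 0$ the kernel formula would force a pole at $z=\conj{\lambda}$ unless $G\equiv 0$, contradicting the hypothesis that $\N$ has no common zeros), so your parenthetical about it can be replaced by a one-line exclusion.
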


\begin{proof}
The proof is very similar to the proof of \cite[Theorem 23]{de1968hilbert}. We reproduce the argument here for the reader's convenience. Fix $\alpha \in \mathbb{C} \setminus \mathbb{R}$. Let $f$ be a function in $\N$ such that $f(\conj{\alpha}) = 0$. For $\lambda \in \mathbb{C}$, the function $\frac{z-\conj{\alpha}}{z-\alpha}\big(k_\N(\lambda, z) - \frac{k_\N(\lambda, \alpha)}{k_\N(\alpha, \alpha)} k_\N(\alpha, z)\big)$ is in $\N$, and we have that 

\begin{gather*}
\Ip{f(z)}{\frac{z-\conj{\alpha}}{z-\alpha}\big(k_\N(\lambda, z) - \frac{k_\N(\lambda, \alpha)}{k_\N(\alpha, \alpha)} k_\N(\alpha, z)\big)}_{\h(E)} \\
= \Ip{\frac{z-\alpha}{z-\conj{\alpha}}f(z)}{k_\N(\lambda, z) - \frac{k_\N(\lambda, \alpha)}{k_\N(\alpha, \alpha)} k_\N(\alpha, z)}_{\h(E)} \\
= \frac{\lambda - \alpha}{\lambda - \conj{\alpha}}f(\lambda) = \Ip{f(z)}{\frac{\conj{\lambda}-\conj{\alpha}}{\conj{\lambda}-\alpha}k_\N(\lambda, z)}_{\h(E)}.
\end{gather*}
The function $$\frac{z-\conj{\alpha}}{z-\alpha}\big(k_\N(\lambda, z) - \frac{k_\N(\lambda, \alpha)}{k_\N(\alpha, \alpha)} k_\N(\alpha, z)\big) - \frac{\conj{\lambda}-\conj{\alpha}}{\conj{\lambda}-\alpha}k_\N(\lambda, z) \in \N$$ is thus orthogonal to any function in $\N$ which vanishes at $\conj{\alpha}$, and is thus a scalar multiple of $k_\N(\conj{\alpha}, z)$. Evaluation at $z = \conj{\alpha}$ shows that
\begin{gather*}
\frac{z-\conj{\alpha}}{z-\alpha}\Bigg(k_\N(\lambda, z) - \frac{k_\N(\lambda, \alpha)}{k_\N(\alpha, \alpha)} k_\N(\alpha, z)\Bigg) - \frac{\conj{\lambda}-\conj{\alpha}}{\conj{\lambda}-\alpha}k_\N(\lambda, z) \\ = -\frac{\conj{\lambda}-\conj{\alpha}}{\conj{\lambda}-\alpha}\cdot\frac{k_\N(\lambda,\conj{\alpha})}{k_\N(\conj{\alpha}, \conj{\alpha})}k_\N(\conj{\alpha}, z)
\end{gather*} from which we can solve for $k_\N(\lambda,z)$ to obtain that 
\begin{gather*}
k_\N(\lambda,z) = \frac{1}{(\conj{\alpha} - \alpha)(z - \conj{\lambda})}\Bigg(\frac{k_\N(\lambda, \alpha)}{k_\N(\alpha,\alpha)}(z-\conj{\alpha})(\conj{\lambda}-\alpha)k_\N(\alpha,z) \\ - \frac{k_\N(\conj{\lambda}, \conj{\alpha})}{k_\N(\conj{\alpha}, \conj{\alpha})}(z-\alpha)(\conj{\lambda} - \conj{\alpha})k_\N(\conj{\alpha}, z) \Bigg).
\end{gather*}\sloppy By setting $\alpha = -i/2$, $$F(z) = k_\N(\alpha,\alpha)^{-1/2}k_\N(\alpha, z)(z-\conj{\alpha})$$ and $$G(z) = k_\N(\conj{\alpha}, \conj{\alpha})^{-1/2}k_\N(\conj{\alpha}, z)(z-\alpha)$$ we see that the kernel $k_\N(\lambda,z)$ is of the form as suggested in \eqref{kerneleq}. Moreover, since $k_N(\alpha,z) \in \h(E)$, we have that $k_N(\alpha,z)/E(z) \in H^2(\mathbb{C}^+)$, and so it follows that $F/E$ is in the Nevanlinna class of the upper half plane. The same is clearly true for $F^*/E, G/E$ and $G^*/E$. 
\end{proof}

\begin{lemma} \thlabel{FGlemma} Let $F,G$ be the entire functions in the expression for the reproducing kernel of $\N$ in \eqref{kerneleq}. Then the following properties hold:

\begin{enumerate}[(i)]
\item $F^*F = G^*G$,
\item $|F(z)| > |G(z)|$ if $z \in \mathbb{C}^-$,
\item $|F(z)| < |G(z)|$ if $z \in \mathbb{C}^+$.
\end{enumerate}
\end{lemma}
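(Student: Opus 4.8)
The plan is to extract all three properties directly from the two structural facts that any reproducing kernel enjoys: that $k_\N(\lambda, z)$ is entire in $z$ (since $k_\N(\lambda, \cdot) \in \N \subseteq \h(E)$, a space of entire functions), and that its diagonal values $k_\N(\lambda, \lambda) = \|k_\N(\lambda, \cdot)\|^2$ are real and nonnegative. No further machinery should be needed.

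For $(i)$ I would use that the numerator in \eqref{kerneleq} must annihilate the zero of the denominator $i(z - \conj{\lambda})$. Because $k_\N(\lambda, \cdot)$ is entire in $z$, the apparent singularity at $z = \conj{\lambda}$ is removable, so the numerator vanishes there. Evaluating $F(z)\conj{F(\lambda)} - G(z)\conj{G(\lambda)}$ at $z = \conj{\lambda}$ and invoking the identity $\conj{F(\lambda)} = F^*(\conj{\lambda})$ (and likewise for $G$) yields $F(\conj{\lambda})F^*(\conj{\lambda}) = G(\conj{\lambda})G^*(\conj{\lambda})$ for every $\lambda$. As $\conj{\lambda}$ runs over all of $\mathbb{C}$, this is exactly the entire-function identity $F^*F = G^*G$.

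For $(ii)$ and $(iii)$ I would specialize \eqref{kerneleq} to the diagonal $z = \lambda$. Since $i(\lambda - \conj{\lambda}) = -2\Im\lambda$, this produces the expression $k_\N(\lambda, \lambda) = \big(|F(\lambda)|^2 - |G(\lambda)|^2\big)/(-2\Im\lambda)$. The decisive input is that $k_\N(\lambda, \lambda) > 0$ \emph{strictly}: if it vanished, then $\langle f, k_\N(\lambda, \cdot)\rangle = f(\lambda) = 0$ for every $f \in \N$, making $\lambda$ a common zero and contradicting the hypothesis on $\N$. Matching signs in the displayed quotient then forces $|F(\lambda)|^2 - |G(\lambda)|^2$ to be negative when $\Im\lambda > 0$ and positive when $\Im\lambda < 0$, which are precisely $(iii)$ and $(ii)$.

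I do not expect a genuine obstacle, as the argument is algebraic once the reproducing-kernel properties are in place. The only point demanding care is the passage to strict inequalities in $(ii)$ and $(iii)$: here the no-common-zeros assumption must be used to upgrade the automatic bound $k_\N(\lambda, \lambda) \geq 0$ to $k_\N(\lambda, \lambda) > 0$, since without it one obtains only the non-strict forms of the two inequalities.
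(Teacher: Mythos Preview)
Your proposal is correct and follows essentially the same route as the paper: for $(i)$ you exploit entirety of $k_\N(\lambda,\cdot)$ to force the numerator to vanish at $z=\conj{\lambda}$, and for $(ii)$--$(iii)$ you evaluate on the diagonal and use the no-common-zeros hypothesis to obtain strict positivity of $k_\N(\lambda,\lambda)$. If anything, you spell out the passage from $k_\N(\lambda,\lambda)\ge 0$ to strict inequality more explicitly than the paper does.
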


\begin{proof}
\sloppy The function $k_\N(\lambda, z)$ is of course an entire function of $z$, and setting $z = \conj{\lambda}$ we see from \eqref{kerneleq} that $F(\conj{\lambda})\conj{F(\lambda)} - G(\conj{\lambda})\conj{G(\lambda)} = 0$. This establishes $(i)$. Properties $(ii)$ and $(iii)$ follow from setting $z = \lambda$ and the fact that $\N$ has no common zeros, so that $k_\N(\lambda,z)$ is not the zero function, and thus \begin{equation*}
0 < \|k_\N(\lambda, \cdot)\|^2_{\h(E)} = k_\N(\lambda, \lambda) = \frac{|F(\lambda)|^2 - |G(\lambda)|^2}{-2 \Im \lambda}.
\end{equation*}
\end{proof}

\begin{lemma} \thlabel{Uexplemma}
Let $F,G$ be the entire functions in the expression for the reproducing kernel of $\N$ in \eqref{kerneleq}, and set $U = G^*/F$. Then $U$ is an exponential function, i.e., $U(z) = e^{i\alpha z}$ for some $\alpha \in \mathbb{R}$. 
\end{lemma}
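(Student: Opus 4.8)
The plan is to show that $U$ is an entire, zero-free function of exponential type whose modulus equals $1$ on the real line, and then to identify it explicitly. The starting point is property $(i)$ of \thref{FGlemma}, which can be rewritten as $G^*/F = F^*/G$; calling this common quotient $U$, one computes $U^* = G/F^* = F/G^*$ and hence $U U^* \equiv 1$. In particular $|U| = 1$ on $\mathbb{R}$ and $U^* = 1/U$.

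Next I would locate the zeros and poles of the meromorphic function $U$ using the two available representations. In the lower half plane I would use $U = G^*/F$: property $(ii)$ forces $F \neq 0$ there, while $G^* \neq 0$ there because $G$ does not vanish in $\mathbb{C}^+$ by $(iii)$; thus $U$ is analytic and zero-free in $\mathbb{C}^-$. Symmetrically, in $\mathbb{C}^+$ the representation $U = F^*/G$ is analytic and zero-free, using $(iii)$ for the denominator and $(ii)$ (via reflection) for the numerator. On the real axis, property $(i)$ gives $|F(x)| = |G(x)| = |G^*(x)|$ for $x \in \mathbb{R}$, so $F$ and $G^*$ share their real zeros with equal multiplicities, and these cancel in the quotient $G^*/F$. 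Consequently $U$ has neither zeros nor poles anywhere, i.e.\ $U$ is entire and zero-free.

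To obtain the exponential form I would show that $U$ is of bounded type in both half planes. By \thref{kernellemma} the functions $F/E$ and $G^*/E$ lie in the Nevanlinna class of $\mathbb{C}^+$, so their quotient $U = (G^*/E)/(F/E)$ does as well; reflecting the conditions of \thref{kernellemma} across $\mathbb{R}$ places $F/E^*$ and $G^*/E^*$ in the Nevanlinna class of $\mathbb{C}^-$, whence $U = (G^*/E^*)/(F/E^*)$ is of bounded type in $\mathbb{C}^-$ too. An entire function of bounded type in both half planes is of exponential type by a classical theorem of Krein, and since $U$ is also zero-free, Hadamard's factorization forces $U(z) = e^{az + b}$ for constants $a, b$. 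The condition $|U| = 1$ on $\mathbb{R}$ then yields $a = i\alpha$ with $\alpha \in \mathbb{R}$ and $\Re b = 0$, so that $U(z) = c\, e^{i\alpha z}$ with $|c| = 1$.

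The remaining point, and the one requiring the explicit normalization of $F$ and $G$, is to show $c = 1$. I would evaluate $U$ at the base point $\alpha_0 = -i/2$ used to define $F$ and $G$: the factors $(z - \overline{\alpha_0})$ cancel and the positive scalars $k_\N(\alpha_0,\alpha_0)^{-1/2}$, $k_\N(\overline{\alpha_0}, \overline{\alpha_0})^{-1/2}$ combine to give $U(\alpha_0) = \big(k_\N(\overline{\alpha_0},\overline{\alpha_0})/k_\N(\alpha_0,\alpha_0)\big)^{1/2} > 0$. Since $U(\alpha_0) = c\, e^{\alpha/2}$ with $e^{\alpha/2} > 0$, the unimodular constant $c$ must be positive, hence $c = 1$ and $U(z) = e^{i\alpha z}$. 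I expect the main obstacle to be the passage from bounded type to exponential type, namely the correct invocation of Krein's theorem together with the verification that $U$ is genuinely entire (the real-axis cancellation of zeros), since the algebraic identities in the other steps are routine.
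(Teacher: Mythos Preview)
Your argument is correct and follows essentially the same outline as the paper's: show that $U$ is entire and zero-free, that $|U|=1$ on $\mathbb{R}$, and that $U$ is of bounded type, and then identify it as an exponential. The only substantive difference lies in the final identification. The paper works in $\mathbb{C}^+$ alone and appeals directly to the Nevanlinna factorization (citing de~Branges' Theorem~9 and Problem~27) to conclude that an entire, zero-free function of bounded type in $\mathbb{C}^+$ with unimodular boundary values is of the form $e^{i\alpha z}$; you instead verify bounded type in \emph{both} half-planes, invoke Krein's theorem to obtain exponential type, and then apply Hadamard's factorization. Both routes are standard and comparably short. Your explicit computation that the unimodular constant $c$ equals $1$, by evaluating $U$ at the base point $\alpha_0=-i/2$, is a point the paper leaves implicit but which is in fact needed for the exact conclusion $U(z)=e^{i\alpha z}$ (as used in the subsequent lemma), so your version is arguably more complete on that score.
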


\begin{proof}
The lemma will be established by verifying a series of claims:

\begin{enumerate}[\itshape(a)]
\item $U$ has no zeros and no poles in $\mathbb{C}$, and is thus an entire function,
\item $|U(x)| = 1$ for every $x \in \mathbb{R}$,
\item $U$ is in the Nevanlinna class of the upper half plane.

\end{enumerate}

If the above three claims are established, then the fact that $U$ is an exponential function follows easily from the Nevanlinna factorization (see, for instance, \cite[Theorem 9 and Problem 27]{de1968hilbert}). We will now establish the three stated claims. Assume that $G^*(\lambda) = \conj{G(\conj{\lambda})} = 0$, and additionally that $\lambda$ does not lie on the real axis. We will show that $F$ has a zero at $\lambda$, of the same order as $G^*$. Since $|G(\conj{\lambda})| = 0$ we see from $(iii)$ of \thref{FGlemma} that $\conj{\lambda}$ must be in the lower half-plane. From $(i)$ of \thref{FGlemma} it follows that either $F$ or $F^*$ has a zero at $\lambda$, but from $(ii)$ of \thref{FGlemma} we see that $|F^*(\lambda)| = |F(\conj{\lambda})| > |G(\conj{\lambda})|$, so $F^*(\lambda)$ is non-zero. Consequently $F$ has a zero at $\lambda$, of the same order as $G^*$. Thus $U = G^*/F$ has no zeros outside of the real axis. In the same manner we can show that $U$ has no poles outside of the real axis.

If $x$ is on the real axis, then by $(iii)$ of \thref{FGlemma} we have $$|F(x)| = \lim_{y \to 0, y > 0} |F(x+iy)| \leq \lim_{ y \to 0, y > 0} |G(x+iy)| = |G(x)|.$$ By considering the limit  when $y < 0$ in a similar manner we obtain that $|F(x)| = |G(x)|$ for real $x$, so that $|U(x)| = 1$, and thus $U$ has no zeros (or poles) on the real axis. This complets the proof of claim $(a)$ and $(b)$. Claim $(c)$ follows from the fact that $U$ is a quotient of $G^*/E$ and $F/E$, which are in the Nevanlinna class  of the upper half plane by \thref{kernellemma}.
\end{proof}

\begin{lemma} \thlabel{UNdbs}
Let $\N$ be a nearly invariant subspace with no common zeros of a de Branges space $\h(E)$, with kernel given by \eqref{kerneleq}, $U(z) = e^{i\alpha z}$ as in \thref{Uexplemma}, and $\sqrt{U}(z) = e^{i\alpha z/2}$. Consider the space $$\h_0 = \sqrt{U}\N = \{ \sqrt{U}(z)f(z) : f(z) \in \N \}.$$ If $\h_0$ is normed by $$\| \sqrt{U}f \|_{\h_0} = \| f \|_{\h(E)},$$ then $\h_0$ is a de Branges space.
\end{lemma}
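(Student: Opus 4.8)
The plan is to avoid checking de Branges' axioms directly and instead to compute the reproducing kernel of $\h_0$ in closed form, recognise it as the reproducing kernel of a concrete de Branges space $\h(E_0)$, and then invoke the uniqueness of a reproducing kernel Hilbert space given its kernel. A direct verification of the axioms would instead require showing that $\N$ is closed under $f \mapsto \frac{z-\conj{w}}{z-w}f$ at nonreal zeros $w$ of $f$, i.e.\ the isometric division property; near invariance only supplies $f/(z-w) \in \N$, so that route meets a genuine obstacle which the kernel computation sidesteps.

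First I would observe that $\h_0$ is a reproducing kernel Hilbert space of entire functions. Indeed, $M_{\sqrt{U}} : f \mapsto \sqrt{U}f$ is by definition a surjective isometry from $\N$ onto $\h_0$, and since $\sqrt{U}(z) = e^{i\alpha z/2}$ is entire and zero-free, every element of $\h_0$ is entire and point evaluations on $\h_0$ are bounded, factoring through the bounded point evaluations on the closed subspace $\N \subseteq \h(E)$. Transporting the inner product through $M_{\sqrt{U}}$ then shows that the reproducing kernel of $\h_0$ is
\begin{equation*}
k_{\h_0}(\lambda, z) = \sqrt{U}(z)\,\conj{\sqrt{U}(\lambda)}\,k_\N(\lambda, z).
\end{equation*}

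Next I would substitute \eqref{kerneleq} and set $\A := \sqrt{U}F$ and $\B := \sqrt{U}G$, so that $k_{\h_0}(\lambda,z) = \frac{\A(z)\conj{\A(\lambda)} - \B(z)\conj{\B(\lambda)}}{i(z - \conj{\lambda})}$. The decisive algebraic step, and the one I expect to demand the most care, is the identity $\B = \A^*$, which is precisely where the exponential form of $U = G^*/F$ from \thref{Uexplemma} enters: from $G^* = UF$ one gets $G = U^*F^* = e^{-i\alpha z}F^*$ since $\alpha \in \mathbb{R}$, whence $\B = \sqrt{U}G = e^{-i\alpha z/2}F^* = (\sqrt{U})^* F^* = \A^*$.

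With $\B = \A^*$ in hand, I would put $E_0 := \sqrt{2\pi}\,\A^*$, an entire function with $E_0^* = \sqrt{2\pi}\,\A$; a short rearrangement of the constant and of the sign of $z - \conj{\lambda}$ then turns the formula for $k_{\h_0}$ into $\frac{E_0(z)\conj{E_0(\lambda)} - E_0^*(z)\conj{E_0^*(\lambda)}}{2\pi i(\conj{\lambda} - z)}$, which is exactly the de Branges kernel of $E_0$. It remains to confirm that $E_0$ is a de Branges function, i.e.\ that $|E_0(z)| > |E_0^*(z)|$ for $z \in \mathbb{C}^+$; since $\sqrt{U}$ is zero-free this is equivalent to $|G(z)| > |F(z)|$ on $\mathbb{C}^+$, which is exactly part (iii) of \thref{FGlemma}. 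Finally, $\h_0$ and the de Branges space $\h(E_0)$ are reproducing kernel Hilbert spaces of entire functions sharing the same reproducing kernel, so they coincide as Hilbert spaces; hence $\h_0 = \h(E_0)$ is a de Branges space.
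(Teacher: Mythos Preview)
Your proof is correct and follows essentially the same route as the paper: compute the reproducing kernel of $\h_0$ as $\sqrt{U}(z)\conj{\sqrt{U}(\lambda)}k_\N(\lambda,z)$, use the identity $(F\sqrt{U})^* = G\sqrt{U}$ (your $\B = \A^*$) coming from $U = G^*/F$, and then verify the de~Branges inequality via \thref{FGlemma}. The only cosmetic differences are that you name $E_0$ and invoke RKHS uniqueness explicitly, and that you check $|E_0|>|E_0^*|$ by cancelling the nonvanishing factor $|\sqrt{U}|$ to reduce directly to $|G|>|F|$ on $\mathbb{C}^+$, whereas the paper reaches the same conclusion through a slightly longer chain using both parts (i) and (iii) of \thref{FGlemma}.
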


\begin{proof} It is easy to see from the definition of $\h_0$ and \thref{kernellemma} that the reproducing kernel $k_{\h_0}(\lambda, z)$ of the space is \begin{gather*} k_{\h_0}(\lambda, z) = \conj{\sqrt{U}(\lambda)}\sqrt{U}(z) k_\N(\lambda ,z) \\ = \frac{F(z)\sqrt{U}(z) \conj{F(\lambda)\sqrt{U}(\lambda)} - G(z)\sqrt{U}(z)\conj{G(\lambda)\sqrt{U}(\lambda)}}{i(z - \conj{\lambda})}. \end{gather*} Thus $\h_0$ will be a de Branges space if we can show that $(F\sqrt{U})^* = G\sqrt{U}$ and $|F(z)\sqrt{U}(z)| < |F(\conj{z})\sqrt{U}(\conj{z})|$ for $z \in \mathbb{C}^+$. The former follows easily from the equality $F^*F = G^*G$, which implies that $F^* = GU$. Indeed, we also have that $U^* = 1/U$, and thus \begin{gather*} (F\sqrt{U})^* = F^*/\sqrt{U} = G U/\sqrt{U} = G \sqrt{U}. \end{gather*} For the latter, note that $|F(\conj{z})\sqrt{U}(\conj{z})| = |F^*(z)\sqrt{U^*}(z)|$ and thus, by squaring, we need to establish the inequality in \begin{gather*}
|F(\conj{z})\sqrt{U}(\conj{z})|^2 = |F^*(z)|^2 |G(z)|/|F^*(z)| \\ = |F^*(z)G(z)| > |F(z)G^*(z)| \\ = |F(z)|^2|G^*(z)|/|F(z)|  = |F(z) \sqrt{U}(z)|^2.
\end{gather*} We see that this inequality above holds from parts $(i)$ and $(iii)$ of \thref{FGlemma}. Indeed, since $z \in \mathbb{C}^+$, we have $$|F(z)G^*(z)| < |G(z)G^*(z)| = |F(z)F^*(z)| < |G(z)F^*(z)|.$$
\end{proof}

\begin{proof}[Proof of \thref{maintheorem}] Follows now immediately from \thref{UNdbs}.
\end{proof}

\begin{proof}[Proof of \thref{PWnearinv}] By \thref{maintheorem} there exists $b \in \mathbb{R}$ such that $e^{ib z}\N$ is a de Branges space. Because $\N \subseteq PW_a$, it follows that $e^{ib z}\N$ is contained in $PW_{a + |b|}$. By the de Branges ordering theorem (see \cite[Theorem 35]{de1968hilbert}), for every $c \in (0,a + |b|)$, either $PW_c \subseteq e^{ib z}\N$ or $e^{ib z}\N \subseteq PW_c$. Let $c_0$ be the supremum of $c$ such that $PW_c \subseteq e^{ib z}\N$ and $c_1$ be the infimum of $c$ such that $e^{ib z}\N \subseteq PW_c$. If $c_0 < c_1$, then for any $c \in (c_0, c_1)$ we would have that $PW_c \not\subset e^{ib z}\N$ and $e^{ib z}\N \not\subset PW_c$, which contradicts the ordering theorem. Thus $c_0 = c_1$, and consequently $PW_{c_0} = e^{ib z}\N$, or $e^{-ib z}PW_{c_0} = \N$. This implies the validity of the statement of the corollary. \end{proof}

\bibliographystyle{siam}
\bibliography{mybib}

\end{document}